\newcommand{\eps}{\varepsilon}
\newcommand{\R}{\mathbb{R}}
\newtheorem{theorem}{Theorem}[section]
\newtheorem{assumption}[theorem]{Assumption}
\newtheorem{proposition}[theorem]{Proposition}
\newtheorem{remark}[theorem]{Remark}
\newtheorem{lemma}[theorem]{Lemma}
\begin{document}

\title{Weak in Space, Log in Time Improvement of the Lady{\v{z}}enskaja-Prodi-Serrin Criteria}

\author{Clayton Bjorland\ \ \ \ \ \ \ \ \ \ \ Alexis Vasseur\\ University of Texas, Austin}

\maketitle

\section{Introduction}

\subsection{Preliminary}

We consider solutions of the Navier-Stokes equation in $\mathbb{R}^3$ given by
\begin{align}
\partial_t u+ \nabla\cdot (u\otimes u) +\nabla P -\triangle u &= 0\label{NSE}\\
\nabla \cdot u &= 0.\notag
\end{align}
This equation is complemented by an initial condition $u(0)=u_0$ and some condition for the behavior as $|x|\rightarrow \infty$, typically $|u|\rightarrow 0$ which is made precise by considering solutions in certain functions spaces.  The literature regarding solutions for these equations is quite large and we discuss only a small subset which is immediately relevant to this paper.  A general open question for solutions is to discover conditions which guarantee a solution is ``regular'' (or smooth) for all time.  For example, given an initial condition $u_0\in L^2(\mathbb{R}^3)$ Leray \cite{Leray} proved there exists a solution (typically called a Leray-Hopf solution, see also \cite{MR0050423}) $u\in L^\infty((0,\infty);L^2(\mathbb{R}^3))$ and $\nabla u\in L^2((0,\infty);L^2(\mathbb{R}^3))$.  If $u_0$ is regular enough,
the solution immediately enters the class of $C^\infty$ smooth functions and remains there for some possibly small time (i.e. $u\in C^\infty((0,T^*)\times\mathbb{R}^3$) but it remains an open question weather it retains this smoothness property for all time (i.e. $u\in C^\infty((0,\infty)\times\mathbb{R}^3$).

Further work by Lady{\v{z}}enskaja \cite{MR0236541}, Prodi \cite{MR0126088} and Serrin \cite{MR0150444} established criteria for regularity which states that if a Leray-Hopf solution satisfies $u\in L^p((0,\infty);L^q(\mathbb{R}^3))$ with $p$ and $q$ satisfying the relation $\frac{2}{p}+\frac{3}{q}=1$, $2\leq p<\infty$ then $u$ is regular on $(0,\infty)\times \mathbb{R}^3$.   Note that all those spaces are invariant through the universal scaling of the Navier-Stokes equation:
$$
u_\eps(t,x)=\eps u(t_0+\eps^2 t, x_0+\eps x).
$$
Since $p<\infty$, Lebesgue's theorem ensures that their norms actually locally shrink when $\eps$ goes to 0.
That is
$$
\lim_{\eps\to0}\|u_\eps\|_{L^p(0,T;L^q(B))}=\lim_{\eps\to0}\|u\|_{L^p(t_0,t_0+\eps^2T;L^q(x_0+\eps ))}=0,
$$
for any ball $B\subset \R^3$.
The cases of invariant norms which do not locally shrink are far more difficult. For $p=\infty$, $L^\infty((0,T);L^3(\mathbb{R}^3))$ is such a space.  The criteria was expanded into this case by Iskauriaza, Ser\"egin and Shverak in \cite{MR1992563}.  

Another example of  spaces which do not shrink locally are the Lorentz spaces $L^{p,r}((0,T);L^{q,\infty}(\mathbb{R}^3))$ with $r\in [p,\infty]$ and $p$, $q$ satisifying the relation $\frac{2}{p}+\frac{3}{q}=1$, $2< p<\infty$.  Here $L^{p,p}$ is the standard Lebesque space, $L^{p,\infty}$ is the weak-$L^p$ space (defined below), and $L^{p,r}$ are the interpolation spaces.  Sohr, \cite{MR1877269}, considered these spaces and proved regularity if a Leray-Hopf solution is bounded in $L^{p,r}((0,T);L^{q,\infty}(\mathbb{R}^3))$ for $r\in (p,\infty)$.  The case $r=\infty$ is also considered and regularity is proved if the $L^{p,\infty}((0,T);L^{q,\infty}(\mathbb{R}^3))$ norm is small.  Additionally, Kozono and Yamazaki, \cite{MR1368344}, and Kozono, \cite{MR1808641}, prove existence and give regularity criteria when initial data is given in the space $L^{d,\infty}$ where $d$ is the dimension of the space in which the fluid evolves.

Recently the Lady{\v{z}}enskaja-Prodi-Serrin criteria has been extended to include  ``log improvements''.
Note that this is a family of spaces which are not invariant anymore through the universal scaling. Indeed, the local norm may blow up through the scaling when $\eps \to 0$. In this sense they are ``subcritical.''  A first log improvement in time only was proposed by  
Montgomery-Smith \cite{MR2160072}, using a Gronwall's argument.  This result was extended in time and space by Chan and Vasseur \cite{MR2437103}, using blow-up methods and De Giorgi techniques. Their proof was simplified and extended by  Zhou and Lei, \cite{zhoulei} using energy estimates.  The result by Zhou and Lei states that regularity is retained at time $T$ if
\begin{align}
\int_0^T\frac{\|u(t)\|^p_{L^q(\mathbb{R}^3)}}{1+\log(e+\|u(t)\|_{L^\infty(\mathbb{R}^3)})}\,dt <\infty\notag
\end{align}
where $p$ and $q$ satisfy the Prodi-Serrin condition $\frac{2}{p}+\frac{3}{q}=1$.

The goal of this paper is to further generalize the log improved Lady{\v{z}}enskaja-Prodi-Serrin criteria to weak-$L^p$ spaces defined as follows:
Let $\mu$ denote the Lebesgue measure on $\mathbb{R}^n$ and $\Omega\subset \mathbb{R}^n$.  Then $f\in L^{p,\infty}(\Omega)$ if and only if $\|f\|_{L^{p,\infty}(\Omega)}<\infty$ where
\begin{align}
\|f\|_{L^{p,\infty}(\Omega)}&=\sup_{\alpha>0}\{\alpha  [\lambda_{f,\Omega}(\alpha)]^{\frac{1}{p}}\}\notag\\
\lambda_{f,\Omega}(\alpha)&= \mu\{x\in \Omega| |f(x)|>\alpha\}.\notag
\end{align}

The main result of the paper is the following:
\begin{theorem}
Let $u$ be a Leray-Hopf solution of (\ref{NSE}),  defined on $(0,T)\times \mathbb{R}^3$, and satisfying the generalized energy inequality (\ref{genergy}).  If the solution also satisfies the bound
\begin{align}
\int_0^T\frac{\|u(s)\|^p_{L^{q,\infty}(\mathbb{R}^3)}}{e+\log(e+\|u(s)\|_{L^\infty(\mathbb{R}^3)})}\,ds < \infty\notag
\end{align}
then $\|u\|_{L^\infty(\mathbb{R}^3\times (\lambda,T])}<\infty$ for any $\lambda>0$.
\end{theorem}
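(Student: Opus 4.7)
The approach I would take is a De Giorgi level-set iteration in the style of Chan-Vasseur, sharpened in the energy form of Zhou-Lei, and adapted to the weak-$L^q$ scale. The conceptual payoff is that weak-$L^q$ norms are already level-set quantities: $\|u\|_{L^{q,\infty}}^q$ directly controls $\alpha^q\mu\{|u|>\alpha\}$, which is the exact object manipulated in such an iteration. This makes weak-$L^q$ in some ways more natural for De Giorgi than strong-$L^q$.

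Fix $\lambda>0$. Choose a threshold $M$ (to be determined), levels $C_k=\tfrac{M}{2}(1-2^{-k})$, and times $t_k\nearrow\lambda$. Introduce the truncated excesses $v_k:=(|u|-C_k)_+$, supported on the super-level set $\{|u|>C_k\}$. Applying the generalized energy inequality~(\ref{genergy}) to $v_k^2$ yields a level-set energy bound of the form
\[
U_k:=\sup_{t\in[t_k,T]}\|v_k(t)\|_{L^2}^2+\int_{t_k}^T\|\nabla v_k(s)\|_{L^2}^2\,ds\le A_k+\iint|u|\,v_k|\nabla v_k|\,dx\,ds+(\text{pressure}),
\]
where $A_k=\|v_k(t_k)\|_{L^2}^2$ and the pressure term is controlled via a Calder\'on-Zygmund bound on $P$ together with the support constraint on $v_k$.

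For the transport term, I would apply H\"older in Lorentz spaces combined with the Sobolev embedding $\dot H^1(\R^3)\hookrightarrow L^{6,2}(\R^3)$. Using the relation $2/p+3/q=1$, this produces the estimate
\[
\iint|u|\,v_k|\nabla v_k|\,dx\,ds\le C\int_{t_k}^T\|u\|_{L^{q,\infty}}\|v_k\|_{L^2}^{1-3/q}\|\nabla v_k\|_{L^2}^{1+3/q}\,ds,
\]
so Young's inequality absorbs $\|\nabla v_k\|_{L^2}^2$ on the left and leaves a Gronwall driver $\|u\|_{L^{q,\infty}}^p$. The log improvement is brought in by coupling the choice of $C_k$ (or an auxiliary cutoff scale) to $\log(e+\|u\|_{L^\infty})$: since $v_k$ is supported where $|u|>C_k$ yet $v_k\le\|u\|_{L^\infty}$ pointwise, moments of $v_k$ carry ratios $\|u\|_{L^\infty}/C_k$ that can be traded against the log factor in the hypothesis. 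After organizing these factors carefully, one should arrive at a recursion $U_k\le(B^k/M^\gamma)U_{k-1}^{1+\delta}$ with some $\gamma,\delta>0$, which the standard iteration lemma drives to $0$ once $M$ is chosen sufficiently large in terms of the integral hypothesis; this yields $|u|\le M/2$ on $(\lambda,T]\times\R^3$.

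The main obstacle will be reconciling weak-$L^q$ control with the log improvement. In Zhou-Lei's strong-$L^q$ argument, Lebesgue's theorem supplies a local shrinkage of the critical norm, which is the essential mechanism for closing the Gronwall loop; weak-$L^q$ admits no such shrinkage, so the log factor has to do precisely that work. This forces a delicate allocation of $\log(e+\|u\|_{L^\infty})$ across the Lorentz-H\"older step, the Young absorption, and the truncation spacing, combined with a bootstrap in which $\|u\|_{L^\infty}$ is first treated as an a~priori finite quantity (for instance by working backward from a putative first singular time, or on a smoothed approximation) and then propagated. Verifying that every De Giorgi step gains a genuine geometric factor --- rather than merely matching asymptotically --- is the technical heart of the argument.
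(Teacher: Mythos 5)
There is a genuine gap, and it sits exactly where you flag the ``main obstacle.'' Your plan is to run a single De Giorgi iteration at the critical scale on all of $(\lambda,T]\times\R^3$ and to extract the logarithmic gain from inside the iteration, by coupling the levels $C_k$ to $\log(e+\|u\|_{L^\infty})$ and ``trading ratios $\|u\|_{L^\infty}/C_k$ against the log factor.'' But no mechanism is given for this trade, and it is not clear one exists: the ratios $\|u\|_{L^\infty}/C_k$ that appear when you convert the support constraint $\chi_{\{v_k>0\}}$ into powers of $v_{k-1}$ are \emph{polynomial} in $\|u\|_{L^\infty}$, whereas the hypothesis only supplies a \emph{logarithmic} division. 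Concretely, after your (correct) Lorentz--H\"older/Sobolev step and Young's inequality, the Gronwall driver is $\|u\|_{L^{q,\infty}}^p$, and under the stated hypothesis $\int_0^T\|u\|_{L^{q,\infty}}^p\,ds$ may be infinite --- if it were finite you would be in Sohr's already-known case $L^{p,p}((0,T);L^{q,\infty})$. So the linear Gronwall does not close, the claimed recursion $U_k\le (B^k/M^\gamma)U_{k-1}^{1+\delta}$ is asserted rather than derived, and the bootstrap ``treat $\|u\|_{L^\infty}$ as a priori finite and propagate'' is circular without a quantitative superlinear inequality to run an Osgood argument on. The global-in-space pressure term is also waved at; pairing $P=\sum R_iR_j(u_iu_j)$, which in space is only in $L^{q/2,\infty}$, with the level-set functions is precisely where the paper needs the splitting of Lemma \ref{splittoweak} and the local compactness exploited in Proposition \ref{pressureprop}.

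The missing idea is the blow-up/scaling step. Because critical weak norms do not shrink under the natural scaling, the paper does \emph{not} attempt a critical-scale iteration: it proves a local smallness criterion (Proposition \ref{uniformboundprop}) for strictly \emph{subcritical} exponents $\frac{2}{\rho}+\frac{3}{\sigma}<1$, then rescales by the $\eps$ that makes $\|u_\eps\|^{\rho}_{L^\rho L^{\sigma,\infty}}=C^*$ to obtain the global superlinear inequality
\begin{align}
\|u(t)\|_{L^\infty(\R^3)}\le A_\lambda\Bigl(1+\|u\|^{\rho}_{L^{\rho}((0,t);L^{\sigma,\infty}(\R^3))}\Bigr),\notag
\end{align}
with $\sigma=\tfrac{3(q-1)}{2}$, $\rho=\tfrac{3(q-1)}{q-3}$ chosen so that $1-\tfrac2\rho-\tfrac3\sigma=\tfrac1\rho$ (Proposition \ref{ineqprop}). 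The logarithm then enters only at the very end: H\"older gives $\|u\|^\rho_{L^\rho L^{\sigma,\infty}}\le\int_0^t\|u\|_{L^\infty}\|u\|^p_{L^{q,\infty}}\,ds$, one writes $\|u\|_{L^\infty}\|u\|_{L^{q,\infty}}^p=\Psi(\|u\|_{L^\infty})B(s)$ with $\Psi(r)=r(e+\log(e+r))$ and $B$ integrable by hypothesis, and the Osgood condition $\int_1^\infty dr/\Psi(r)=\infty$ closes the nonlinear Gronwall. If you want to salvage your outline, you must either reproduce this two-scale structure (subcritical local smallness plus scaling plus log-Gronwall) or exhibit an explicit inequality in which the factor $1/(e+\log(e+\|u\|_{L^\infty}))$ genuinely multiplies the driver of your recursion; as written, the proposal does not contain either.
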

\begin{remark}
We remark that the bound
\begin{align}
\int_0^T\left\|\frac{u(s)}{e+\log(e+|u(x,s)|)}\right\|^p_{L^{q,\infty}(\mathbb{R}^3)}\,ds < \infty\notag
\end{align}
implies the assumed bound in the above theorem.
\end{remark}

It is well known (since the work of Leray) that the $L^\infty$ bound above implies regularity.  
A current open problem is to deduce regularity at $(t_0,x_0)$ for solutions of the Navier-Stokes equation assumed to be bounded by 
\begin{align}
f(t,x)= \frac{C}{\sqrt{(x-x_0)^2+(t_0-t)}}, \qquad C>0.\notag
\end{align}
Such bounds do not a priori eliminate singularities at $(x_0,t_0)$.  Results have been obtained in the case of axisymmetric solutions by Seregin and {\v{S}}ver\'{a}k \cite{MR2512858}, and also by Chen, Strain, Tsai, and Yau  \cite{MR2512859}, \cite{MR2429247}. However, the problem remains open in the general setting.   It is easily checked through direct calculation that $f\in L^{p,\infty}((0,t_0);L^{q,\infty})(\mathbb{R}^3)$ with $q$ and $p$ satisfying the usual Prodi-Serrin relation $\frac{2}{p}+\frac{3}{q}=1$, which justifies the interest of the weak $L^p$ space.
Note that our theorem ensures the smoothness of any solution of Navier-Stokes equation bounded by a function
$$
f(t,x)=\frac{a(t)}{\sqrt{(x-x_0)^2+(t_0-t)}},
$$
if 
$$
\int_0^{t_0} \frac{a(t)^p}{(t-t_0)(e+\log [e+a(t)/\sqrt{t_0-t}])}\,dt<\infty.
$$
This does not include constant $C$.  In the appendix we give an example of a function $a$ for which the above bound is true but the function $f$ is not in $L^{p,r}((0,T);L^{q,\infty}(\mathbb{R}^3))$ for any $r\in(0,\infty)$. 

In the context of weak $L^{p,\infty}$ spaces, the energy method of Zhou and Lei collapses. Our approach is inherited from the Chan Vasseur paper. It is based on blow-up techniques and the application of the De Giorgi method to the Navier-Stokes equation developed by Vasseur \cite{MR2374209}. Note that this kind of  De Giorgi technique was previously used by Beir\~{a}o da Veiga  for the Navier-Stokes equation in other contexts \cite{MR1608053}, \cite{MR1648218}. The method requires the solution to satisfy the following generalized energy inequality in the sense of distributions:
\begin{align}
\partial_t \frac{|u|^2}{2}+\nabla \cdot (u\frac{|u|^2}{2})+\nabla\cdot (uP)+|\nabla u|^2-\triangle \frac{|u|^2}{2}\leq 0.\label{genergy}
\end{align}
This generalized energy inequality was introduced in partial regularity work of Caffarelli, Kohn and Nirenberg \cite{MR673830}.

The paper is organized as follows.  The remainder of this section contains useful lemmas involving weak $L^p$ spaces as well as the local energy flux estimates.  The second section is devoted to establishing an inequality which will be used in the final section to establish the regularity result mentioned above.  This inequality is established by bounding the initial energy locally with weak $L^p$ norms which we can scale small.  Combining these bounds with a recursive lemma and a scaling argument it is shown, using the flux estimates presented at the end of this section,  that the $L^\infty$ norm of the solution is bounded by certain weak $L^p$ norms.  Section three combines the inequality established in section two with a Gronwall argument to prove the main theorem mentioned above.  






\subsection{Weak $L^p$ Spaces}






We now summarize some well known properties of the weak spaces providing elementary proofs where appropriate.  The following lemma and corollary gives bounds for functions on compact sets.  Throughout, the indicator function of the set $E$ will be denoted $\chi_E$ (i.e $\chi_E(x)=1$ if $x\in E$ and $\chi_E(x)=0$ if $x\notin E$).

\begin{lemma}\label{weaklemma}
Let $K\subset \mathbb{R}^n$ be a compact set.  If $p<r$ the following hold
\begin{itemize}
 \item $\|f\|_{L^{p,\infty}(K)}\leq C(K)\|f\|_{L^{r,\infty}(K)}$
 \item For any $\epsilon>0$ there exists a $C(\epsilon,K)$ such that
\begin{align}
\|f\|_{L^p(K)}\leq C(\epsilon) \|f\|^{\frac{r}{p}}_{L^{r,\infty}(K)}+\epsilon\mu(K)^{\frac{1}{p}}.\notag
 \end{align}
\end{itemize}
\end{lemma}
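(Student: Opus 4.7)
The plan is to handle the two bounds separately, both via elementary manipulation of the distribution function $\lambda_{f,K}$. The first bound is pure H\"older-type monotonicity of the weak norms on finite-measure sets, and the second is the standard layer-cake integral split at a threshold, with the threshold tuned to match the $\epsilon$ in the statement.

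For part (1), I would use that $K$ is compact, so $\lambda_{f,K}(\alpha)\leq \mu(K)$ for every $\alpha>0$. Since $p<r$, we have $1/p-1/r>0$, so the factorization
\[
\lambda_{f,K}(\alpha)^{1/p}=\lambda_{f,K}(\alpha)^{1/r}\,\lambda_{f,K}(\alpha)^{1/p-1/r}\leq \mu(K)^{1/p-1/r}\,\lambda_{f,K}(\alpha)^{1/r}
\]
yields $\alpha\lambda_{f,K}(\alpha)^{1/p}\leq \mu(K)^{1/p-1/r}\,\alpha\lambda_{f,K}(\alpha)^{1/r}$. Taking the supremum over $\alpha>0$ gives the inequality with $C(K)=\mu(K)^{1/p-1/r}$.

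For part (2), I would start from the layer-cake identity
\[
\|f\|_{L^p(K)}^p=p\int_0^\infty\alpha^{p-1}\lambda_{f,K}(\alpha)\,d\alpha,
\]
and split the integral at a threshold $\delta>0$ (to be chosen). On $(0,\delta)$, the trivial bound $\lambda_{f,K}(\alpha)\leq\mu(K)$ produces a contribution of $\delta^p\mu(K)$. On $(\delta,\infty)$, the defining bound $\lambda_{f,K}(\alpha)\leq\alpha^{-r}\|f\|_{L^{r,\infty}(K)}^r$ gives, since $r>p$ makes the tail integral converge, a contribution of $\frac{p}{r-p}\delta^{p-r}\|f\|_{L^{r,\infty}(K)}^r$. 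Taking the $p$-th root and invoking subadditivity $(a+b)^{1/p}\leq a^{1/p}+b^{1/p}$ (valid for $p\geq 1$) separates the two terms. Finally, I would set $\delta=\epsilon$, which matches the first term to $\epsilon\mu(K)^{1/p}$ exactly and absorbs the remaining dependence on $\epsilon$ into $C(\epsilon)=\bigl(\tfrac{p}{r-p}\bigr)^{1/p}\epsilon^{(p-r)/p}$.

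No real obstacle is expected; the argument is the standard splitting of the layer-cake integral tuned so that the small-$\alpha$ piece is controlled by the volume of $K$ and the large-$\alpha$ piece by the weak-$L^r$ norm. The only minor subtlety is the application of subadditivity of $x\mapsto x^{1/p}$, which requires $p\geq 1$; in the Prodi-Serrin range of exponents relevant to the paper this is automatic, and otherwise one could keep the bound at the level of $\|f\|_{L^p(K)}^p$ and absorb a factor of $2^{1-1/p}$ into $C(\epsilon)$ without affecting the structure of the estimate.
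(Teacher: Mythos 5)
Your proposal is correct and follows essentially the same route as the paper: part (1) is the monotonicity $\lambda_{f,K}(\alpha)^{1/p}\leq \mu(K)^{1/p-1/r}\lambda_{f,K}(\alpha)^{1/r}$ on the finite-measure set $K$ (the paper phrases it via the $L^p$--$L^r$ comparison of the indicator $\chi_{|f|>\alpha}$, which is the same computation), and part (2) is exactly the paper's layer-cake split of $p\int_0^\infty\alpha^{p-1}\lambda_{f,K}(\alpha)\,d\alpha$ at the threshold $\epsilon$, bounding the low piece by $\epsilon^p\mu(K)$ and the tail by the weak-$L^r$ norm. Your remark about the $p$-th root and subadditivity is a reasonable tidying of a step the paper leaves implicit; no substantive difference.
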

\begin{proof}
The first statement follows from the similar property for $L^p$ spaces.
\begin{align}
\lambda_{f,K}(\alpha)^{\frac{1}{p}} &= \left( \int_K \chi^p_{|f|>\alpha} \,d\mu\right)^{\frac{1}{p}} \leq C(K)\left( \int_K \chi^r_{|f|>\alpha} \,d\mu\right)^{\frac{1}{r}}
\leq C(K)\lambda_{f,K}(\alpha)^{\frac{1}{r}}. \notag
\end{align}
To prove the second statement we use a well known description of $L^p$ norms.
\begin{align}
\int_K |f|^p \,d\mu &= p\int_0^\infty \alpha^{p-1}\lambda_f(\alpha)\,d\alpha\notag\\
&\leq p\int_\epsilon^\infty\alpha^{p-r-1}\{\alpha^r\lambda_f(\alpha)\}\,d\alpha + \epsilon^p\mu(K)\notag\\
&\leq p\|f\|^r_{L^{r,\infty}(K)}\left(\int_\epsilon^\infty\alpha^{p-r-1}\,d\alpha \right)+ \epsilon^p\mu(K).\notag
\end{align}
\end{proof}
















We now recall a useful way to move from $L^p$ to weak $L^p$ spaces that will help when bounding the pressure terms through the Riesz transform.

\begin{lemma}\label{splittoweak}
Let $r,r_1,r_2$ be such that $1<r_1<r<r_2<\infty$.
Then
\begin{align}
\|f\chi_{\{f\geq 1\}}\|_{L^{r_1}(\mathbb{R}^n)} \leq C(r,r_1)\|f\|^{\frac{r}{r_1}}_{L^{r,\infty}(\mathbb{R}^n)}\notag\\
\|f\chi_{\{f< 1\}}\|_{L^{r_2}(\mathbb{R}^n)} \leq C(r,r_2)\|f\|^{\frac{r}{r_2}}_{L^{r,\infty}(\mathbb{R}^n)}.\notag
\end{align}
\end{lemma}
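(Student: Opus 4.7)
The plan is to prove both bounds via the layer-cake formula, splitting the distribution integral at $\alpha = 1$ and exploiting the fact that the two indicator functions respectively truncate the distribution function from below (for $\{|f|\geq 1\}$) or from above (for $\{|f|<1\}$).

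For the first inequality, I would write
\begin{align}
\|f\chi_{\{|f|\geq 1\}}\|^{r_1}_{L^{r_1}(\mathbb{R}^n)} = r_1\int_0^\infty \alpha^{r_1-1}\,\mu\{x : |f(x)|\chi_{\{|f|\geq 1\}}>\alpha\}\,d\alpha,\notag
\end{align}
and split the $\alpha$-integral at $1$. For $\alpha\geq 1$ the truncation is invisible and the distribution is bounded by $\alpha^{-r}\|f\|_{L^{r,\infty}}^r$ from the weak-$L^p$ definition, while for $0<\alpha<1$ the level set is simply $\{|f|\geq 1\}$, whose measure is controlled by taking $\alpha=1$ in the weak-$L^p$ bound, i.e.\ by $\|f\|_{L^{r,\infty}}^r$. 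Since $r_1<r$, the tail integral $\int_1^\infty \alpha^{r_1-r-1}\,d\alpha = 1/(r-r_1)$ converges, and the low-$\alpha$ piece contributes $\|f\|_{L^{r,\infty}}^r$. Summing gives $\|f\chi_{\{|f|\geq 1\}}\|_{L^{r_1}}^{r_1}\leq C(r,r_1)\|f\|_{L^{r,\infty}}^r$, and taking the $r_1$-th root yields the stated bound.

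For the second inequality the mirror-image argument works: the truncation forces $\mu\{|f|\chi_{\{|f|<1\}}>\alpha\}=0$ for $\alpha\geq 1$, so only the range $\alpha\in(0,1)$ contributes, on which the measure is bounded by $\alpha^{-r}\|f\|_{L^{r,\infty}}^r$. The convergence condition is now $r_2>r$, which makes $\int_0^1 \alpha^{r_2-r-1}\,d\alpha = 1/(r_2-r)$ finite. This gives $\|f\chi_{\{|f|<1\}}\|_{L^{r_2}}^{r_2}\leq C(r,r_2)\|f\|_{L^{r,\infty}}^r$, and the lemma follows.

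There is no real obstacle here; the only point requiring attention is keeping the roles of $r_1$ and $r_2$ straight so that the integrability exponent $r_1-r-1$ (respectively $r_2-r-1$) lands on the correct side of $-1$, which is precisely why the hypothesis is imposed as $r_1<r<r_2$ in the statement. Once the exponents are placed correctly, both halves reduce to the elementary observation that weak $L^{r,\infty}$ controls the $L^{r_1}$-norm of the ``large'' part and the $L^{r_2}$-norm of the ``small'' part by the same quantity $\|f\|_{L^{r,\infty}}^{r/r_j}$.
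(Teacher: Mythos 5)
Your proof is correct and follows essentially the same route as the paper: both arguments control the truncated $L^{r_j}$ norms through the distribution function, bound $\mu\{|f|>\alpha\}$ by $\alpha^{-r}\|f\|_{L^{r,\infty}}^r$, and use $r_1<r$ (resp.\ $r_2>r$) to make the resulting tail converge. The only cosmetic difference is that you use the continuous layer-cake formula split at $\alpha=1$, whereas the paper discretizes over the dyadic level sets $\{f>2^k\}$; your handling of the range $\alpha\in(0,1)$ in the first estimate is if anything slightly more careful than the paper's.
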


\begin{proof}
Using the definition of weak $L^p$ spaces,
\begin{align}
\int_{\mathbb{R}^n} f^{r_1}\chi_{\{f\geq 1\}}&\leq \sum_{k=1}^\infty (2^{(k+1)r_1}-2^{kr_1} )\mu\{x|f(x)>2^k\}\notag\\
&\leq \sum_{k=1}^\infty \left( 2^{kr}\mu\{x|f(x)>2^k\}\right)2^{(r_1-r)k}(2^{r_1}-1)\notag\\
&\leq \|f\|_{L^{r,\infty}(\mathbb{R}^3)}\sum_{k=1}^\infty 2^{(r_1-r)k}(2^{r_1}-1).\notag
\end{align}
The sum on the right hand side is finite when $r_1<r$ and this proves the first statement.  The second statement follows in a similar way.
\end{proof}







\subsection{Local Energy Flux Estimates}
Following  \cite{MR2374209}, introduce the following scheme to localize energy.
\begin{align}
T_k &= -\frac{1}{2}\left(1+\frac{1}{2^k}\right)\notag\\
B_k&=B(1/2(1+2^{-3k}))\notag\\
Q_k &= [T_k,1]\times B_k \notag\\
v_k &= \{|u|-(1-\frac{1}{2^k})\}_+\notag\\
d_k^2&= v\frac{|\nabla u|^2}{|u|}+\chi_{v>0}\frac{(1-\frac{1}{2^k})}{|u|}|\nabla|u||^2\notag\\
U_k &=\frac{1}{2}\|v_k\|^2_{L^\infty((T_k,1);L^2(B_k))}+\int_{Q_{k}}d^2_k\,dx\,dt\notag
\end{align}

We also make use of the following inequality which measures how energy is transferred from one level set $U_k$ to the next.
\begin{proposition}\label{vasseurprop}(Vasseur)
Let $p>1$.  There exists universal constants $C_p,\beta_p>1$, depending only on $p$ such that for any solution to (\ref{NSE}), (\ref{genergy}) in $[-1,1]\times B(1)$, if $U_0\leq 1$ then we have for every $k>0$:
\begin{align}
U_k\leq C_p^k(1+\|P\|_{L^p(0,1;L^1(B_0))})U_{k-1}^{\beta_p}.\notag
\end{align}
\end{proposition}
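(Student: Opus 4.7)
The plan is to derive a localized energy inequality for the truncation $v_k$ on the shrinking cylinder $Q_k$ and to convert it into a recursive bound $U_k \leq C^k(1+\|P\|_{L^p(0,1;L^1(B_0))}) U_{k-1}^{\beta_p}$ by combining a parabolic Sobolev embedding with a De Giorgi-type indicator switch. The nonlocal pressure has to be treated separately via a near/far decomposition.

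First I would pick a space-time cutoff $\eta_k$ supported in $Q_{k-1}$, equal to $1$ on $Q_k$, and satisfying $|\partial_t\eta_k| + |\nabla\eta_k|^2 + |\Delta\eta_k| \leq C^k$. Testing (\ref{genergy}) against $\eta_k^2 \cdot v_k/|u|$ (the factor $v_k/|u|$ converts the inequality for $|u|^2/2$ into one for $v_k^2/2$ through the chain rule, vanishing by convention on $\{v_k=0\}$) and absorbing the resulting dissipation pieces into $d_k^2$, I would obtain
$$U_k \leq C^k \int_{Q_{k-1}} v_k^2 (1+|u|)\, dx\, dt + (\text{pressure contributions}).$$
The transport and localization errors combine to produce the first term, where the pointwise bound $|u|\chi_{\{v_k>0\}} \leq 1+v_{k-1}$ lets me replace $|u|$ by $v_{k-1}$.

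For this first term I would exploit the De Giorgi switch: since $v_k > 0$ forces $v_{k-1} > 2^{-k}$, one has $\chi_{\{v_k>0\}} \leq (2^k v_{k-1})^\alpha$ for any $\alpha > 0$. Combined with the parabolic Gagliardo--Nirenberg estimate $\|v_{k-1}\|_{L^{10/3}(Q_{k-1})}^{10/3} \leq C\, U_{k-1}^{5/3}$ and H\"older's inequality, the resulting integral is dominated by $C^k U_{k-1}^{\beta}$ for some universal $\beta > 1$; the surplus power over $1$ is purchased by the Chebyshev-type gain in the measure of $\{v_k > 0\}$ inside $Q_{k-1}$, and $\alpha$ may be enlarged as much as needed to force the exponent strictly above $1$.

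The pressure term is the main obstacle and is exactly where the $\|P\|_{L^p(0,1;L^1(B_0))}$ factor enters. I would split $P = P_1 + P_2$ on $B_{k-1}$ with $P_1$ defined on $\mathbb{R}^3$ by $-\Delta P_1 = \partial_i \partial_j (u_i u_j \chi_{B_{k-1}})$, so that $P_2 := P - P_1$ is harmonic on $B_{k-1}$. For $P_1$, Calder\'on--Zygmund (or Lemma \ref{splittoweak} combined with splitting $u\otimes u$ at the threshold $1$) yields control of $P_1$ in a mixed Lebesgue space by a fractional power of $U_{k-1}$, and pairing with $v_k$ and the De Giorgi switch delivers a $C^k U_{k-1}^\beta$ contribution. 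For $P_2$, harmonicity yields the interior estimate $\sup_{B_k}|P_2| \leq C\bigl(\|P\|_{L^1(B_{k-1})} + \|u\|_{L^2(B_{k-1})}^2\bigr)$; integrating in time in $L^p$ and pairing with $\int v_k^2\chi_{\{v_k>0\}}$ produces the factor $\|P\|_{L^p(0,1;L^1(B_0))}$ multiplying yet another $U_{k-1}^\beta$ piece. Summing all contributions yields the stated recursion, and the delicate point throughout is to tune the Sobolev and H\"older exponents so that every term carries a strict power $>1$ of $U_{k-1}$, which requires using the freedom in $\alpha$ to swallow any leftover Lebesgue exponents produced by Calder\'on--Zygmund.
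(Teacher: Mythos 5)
The paper does not actually prove this proposition---it simply cites Vasseur \cite{MR2374209}---and your sketch is a faithful outline of the De Giorgi argument in that reference: the level-set energy inequality for $v_k$, the parabolic Sobolev bound $\|v_{k-1}\|^{10/3}_{L^{10/3}(Q_{k-1})}\leq C\,U_{k-1}^{5/3}$ combined with the indicator switch $\chi_{\{v_k>0\}}\leq (2^k v_{k-1})^{\alpha}$, and the near-field Calder\'on--Zygmund / far-field harmonic splitting of the pressure are exactly the ingredients used there. The only points to tighten are routine: justifying the non-smooth, solution-dependent test function $\eta_k^2 v_k/|u|$; tracking the $C^k$ loss in the interior estimate for the harmonic part (the gap between $B_k$ and $B_{k-1}$ shrinks geometrically, so the mean-value constant grows like $2^{9k}$, which is still admissible); and using the hypothesis $U_0\leq 1$ to merge the several distinct powers of $U_{k-1}$ produced by the various terms into a single exponent $\beta_p>1$.
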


\begin{proof}
See \cite{MR2374209}.
\end{proof}

\begin{remark}
In \cite{MR2374209} it was shown that this relation can be used to recover the partial regularity of Caffarelli, Kohn, and Nirenberg \cite{MR673830}.  Here we take a different approach and bound $U_0$ in terms of weak norms which we can scale small.
\end{remark}

\section{Local Study}
The goal of this section is to give universal control of $\|u(t)\|_\infty$ in terms of $\|u\|_{L^{\rho}((0,t);L^{\sigma,\infty}(\mathbb{R}^3))}$.  This relation is established in Proposition \ref{ineqprop} and will later be combined with a Gronwall argument to establish regularity.  To accomplish this we first establish uniform local control on $|u|$  (Proposition \ref{uniformboundprop}) then apply a scaling argument to obtain the global bound.

\subsection{Initial Energy and Pressure Bounds}
We begin by  establishing a way to bound the pressure term through the Riesz transform.
\begin{proposition}\label{pressureprop}
 Let $\rho,\sigma\in (3,\infty)$, and $r,p$ satisfy $1\leq r<\frac{\sigma}{2}$ and $1\leq p<\frac{\rho}{2}$.
Then, for any $\epsilon\in(0,1)$ there exists a $C(\epsilon)>0$ such that for any solution to (\ref{NSE}), (\ref{genergy}) in $\mathbb{R}^3\times [-2,1]$ satisfies
\begin{align}
\|P\|_{L^p((-2,1);L^r(B_{-1}))}&\leq C(\epsilon)\|u\|^{\frac{\rho}{2p}}_{L^{\rho}((-2,1);L^{\sigma,\infty}(\mathbb{R}^3))}+\epsilon\notag
\end{align}
Here the constant $C(\epsilon)$ also depends on $\rho,\sigma,r$ and $p$.
\end{proposition}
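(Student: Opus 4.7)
The plan is to start from the pressure representation $P = R_i R_j(u_i u_j)$, valid for solutions on $\mathbb{R}^3$ decaying at infinity, and push control through the Calderón--Zygmund boundedness of the double Riesz transform on strong $L^s$ (and on Lorentz $L^{s,\infty}$) for $1<s<\infty$. All the velocity information we have is in a weak space, so the translation from weak $L^{\sigma,\infty}$ in $x$ to strong $L^r$ in $x$ will be carried out by the two elementary lemmas proved in the previous subsection; the output should be a pointwise-in-$t$ estimate of $\|P(t)\|_{L^r(B_{-1})}$ in terms of $\|u(t)\|_{L^{\sigma,\infty}}$, which is then integrated in time.

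First I would split $u(t) = u(t)\chi_{\{|u|\ge 1\}} + u(t)\chi_{\{|u|<1\}}$ and feed it into Lemma \ref{splittoweak}. Writing $u\otimes u$ as a sum of a large--large piece, two cross pieces, and a small--small piece, and choosing the intermediate indices $r_1<\sigma<r_2$ appropriately, each piece lies in an $L^{s}(\mathbb{R}^3)$ with $s$ slightly larger than $\sigma/2$ and with norm controlled by a power of $\|u(t)\|_{L^{\sigma,\infty}}$. The Riesz transform then hands this control directly over to the pressure. Restricting to the compact ball $B_{-1}$ and invoking the second part of Lemma \ref{weaklemma} brings the exponent down to any $r<\sigma/2$, at the cost of the additive $\epsilon$ and a large multiplicative $C(\epsilon)$---this is where the $\epsilon$ in the statement is born.

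Finally I would raise to the $p$-th power and integrate in $t$, using Hölder's inequality, and if necessary the analogue of Lemma \ref{weaklemma} applied to the scalar function $t\mapsto\|u(t)\|_{L^{\sigma,\infty}}$ viewed as an element of $L^\rho((-2,1))$, to convert the resulting integrand into the $L^\rho_t L^{\sigma,\infty}_x$ norm on the right-hand side. The main obstacle is the bookkeeping: one must balance the threshold in the $u_1/u_2$ split, the intermediate exponents $r_1,r_2$, and the time-Hölder exponent so that the final power matches the stated $\rho/(2p)$ and all residual small terms combine into a single additive $\epsilon$. Treating the quadratic nonlinearity $u\otimes u$ and the time integration simultaneously, rather than sequentially, is what allows the sharp range $p<\rho/2$ and $r<\sigma/2$ without losing room.
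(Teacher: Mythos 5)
Your proposal follows essentially the same route as the paper: $P=\sum_{i,j}R_iR_j(u_iu_j)$, a split along $\{|u|\ge 1\}$ fed into Lemma \ref{splittoweak} (your two cross pieces vanish identically since the indicators have disjoint supports, so your decomposition reduces to the paper's direct split of $u_iu_j$), Hölder on the compact ball $B_{-1}$, and Hölder together with the second part of Lemma \ref{weaklemma} in time to produce the additive $\epsilon$. The one piece of bookkeeping to tighten is that the large part $|u|^2\chi_{\{|u|\ge 1\}}$ lands in $L^{r_1}(\mathbb{R}^3)$ only for $r_1<\sigma/2$ (not ``slightly larger than $\sigma/2$''), while the small part lands in $L^{r_2}$ for $r_2>\sigma/2$; with the exponents $r<r_1<\sigma/2<r_2$ and $p<r_1\rho/\sigma$ chosen accordingly, your argument is the paper's.
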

\begin{proof}

Let $R_i$ denote the Riesz transform, i.e. the integral operator with kernel $K_i(x)=c_n(x_i/|x|^{n+1})$.  The Riesz transform is a bounded linear operator mapping $L^p(\mathbb{R}^3)\rightarrow L^p(\mathbb{R}^3)$ for all $p\in (1,\infty)$ (see \cite{MR0304972}).  By taking the divergence of the Navier-Stokes equation one can see that the pressure can be represented as $P=\sum_{i,j}R_iR_j(u_iu_j)$.  We further decompose the pressure
\begin{align}
P&=P_1+P_2\notag\\
P_1&=\sum_{ij}R_iR_j(u_iu_j\chi_{\{|u|\geq 1\}})\notag\\
P_2&=\sum_{ij}R_iR_j(u_iu_j\chi_{\{|u|< 1\}}).\notag
\end{align}
Choose $r_1$ and $r_2$ such that $r<r_1<\frac{\sigma}{2}<r_2$ and $p<\frac{r_1\rho}{\sigma}$.  Using the boundedness of the Riesz transform and Lemma \ref{splittoweak} we bound
\begin{align}
\|P_1\|_{L^r(B_{-1})} &\leq C\|P_1\|_{L^{r_1}(B_{-1})}\notag\\
&\leq C\||u|^2\chi_{\{|u|\geq 1\}}\|_{L^{r_1}(\mathbb{R}^3)}\notag\\
&\leq C\||u|^2\|^{\frac{\sigma}{2r_1}}_{L^{\frac{\sigma}{2},\infty}(\mathbb{R}^3)}.\notag
\end{align}
The first inequality above relies on the fact that $B_{-1}$ is a compact set.  Similarly,
\begin{align}
\|P_2\|_{L^r(B_{-1})} &\leq C\||u|^2\|^{\frac{\sigma}{2r_2}}_{L^{\frac{\sigma}{2},\infty}}.\notag
\end{align}
Note $p<\frac{r_1\rho}{\sigma}<\frac{r_2\rho}{\sigma}$.  Using the two above estimates and relying on the compact time interval yields
\begin{align}
\|P_1\|_{L^{p}((-2,1);L^r(B_{-1}))} &\leq C(\epsilon)\||u|^2\|^{\frac{\rho}{2p}}_{L^{\frac{\rho}{2}}((-2,1);L^{\frac{\sigma}{2},\infty}(\mathbb{R}^3))}+\epsilon\notag\\
\|P_2\|_{L^{p}((-2,1);L^r(B_{-1}))} &\leq C(\epsilon)\||u|^2\|^{\frac{\rho}{2p}}_{L^{\frac{\rho}{2}}((-2,1);L^{\frac{\sigma}{2},\infty}(\mathbb{R}^3))}+\epsilon.\notag
\end{align}
The proposition follows immediately from these estimates.
\end{proof}

Our next step is to bound $U_0$ in terms of $\|u\|_{L^\rho((0,t);L^{\sigma,\infty}(\mathbb{R}^3))}$.
\begin{proposition}\label{initialboundprop}
Let $\sigma,\rho\in(3,\infty)$.  Given any $\epsilon\in(0,1)$ there exists a $C(\epsilon)>0$ such that any solution to (\ref{NSE}), (\ref{genergy}) in $\mathbb{R}^3\times [-2,1]$ satisfies
\begin{align}
U_0\leq C(\epsilon)\|u\|^\rho_{L^{\rho}((-2,1);L^{\sigma,\infty}(\mathbb{R}^3))} +\epsilon. \notag
\end{align}
\end{proposition}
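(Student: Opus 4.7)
The plan is to derive a local energy estimate from the generalized energy inequality and then bound each term by weak-$L^p$ norms via Lemma \ref{weaklemma} and Proposition \ref{pressureprop}. For $k=0$ one has $v_0=|u|$ and $d_0^2=|\nabla u|^2\chi_{\{|u|>0\}}$, so
\begin{align*}
U_0 = \tfrac{1}{2}\|u\|^2_{L^\infty((-1,1);L^2(B(1)))} + \int_{Q_0}|\nabla u|^2\,dx\,dt.
\end{align*}
I would test (\ref{genergy}) against a nonnegative smooth cutoff $\phi$ identically one on $[-1,1]\times B(1)$ and supported in $(-2,1)\times B_{-1}$, and integrate by parts, which yields
\begin{align*}
U_0 \leq C\int_{-2}^{1}\!\!\int_{B_{-1}}\bigl(|u|^2+|u|^3+|u|\,|P|\bigr)\,dx\,dt
\end{align*}
with $C$ a universal constant depending only on $\phi$.

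For the local powers of $|u|$ I would apply the second statement of Lemma \ref{weaklemma} in the spatial variable, choosing the auxiliary exponent according to the relative size of $\sigma$ and $\rho$. When $\sigma\le\rho$, taking auxiliary exponent $\sigma$ gives, for $m\in\{2,3\}$,
\begin{align*}
\|u(t)\|^m_{L^m(B_{-1})} \leq C(\epsilon)\|u(t)\|^\sigma_{L^{\sigma,\infty}(\mathbb{R}^3)} + \epsilon,
\end{align*}
and the pointwise splitting $X^\sigma\le\epsilon+C(\epsilon)X^\rho$ (valid since $\sigma\le\rho$) followed by integration in time delivers the bound. When $\sigma>\rho$, taking auxiliary exponent $\rho$ instead and using the first statement of Lemma \ref{weaklemma} to lift $\|u\|_{L^{\rho,\infty}(B_{-1})}$ to $\|u\|_{L^{\sigma,\infty}(\mathbb{R}^3)}$ produces an exponent already equal to $\rho$, and $t$-integration closes the estimate directly.

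The pressure term is handled through H\"older's inequality
\begin{align*}
\int_{-2}^{1}\!\!\int_{B_{-1}}|u|\,|P|\,dx\,dt \leq \|u\|_{L^{p'}((-2,1);L^{r'}(B_{-1}))}\|P\|_{L^p((-2,1);L^r(B_{-1}))},
\end{align*}
with $(p,r)$ chosen in the range allowed by Proposition \ref{pressureprop} and the conjugates $(p',r')$ taken large enough that Lemma \ref{weaklemma} applies to the velocity factor. Proposition \ref{pressureprop} controls the pressure factor by $C(\epsilon)\|u\|^{\rho/(2p)}_{L^{\rho}(L^{\sigma,\infty})}+\epsilon$, while Lemma \ref{weaklemma} in space together with time integration bounds the velocity factor by $C(\epsilon)\|u\|^{\rho/p'}_{L^{\rho}(L^{\sigma,\infty})}+\epsilon$. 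The combined principal exponent is $\rho/p'+\rho/(2p)=\rho(1-1/(2p))<\rho$, so a final application of $X^a\le\epsilon+C(\epsilon)X^\rho$ for $a<\rho$ puts the estimate in the required form; the $\epsilon$-cross terms produced by expanding the product are absorbed by the same splitting.

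The main obstacle will be the bookkeeping of exponents in the pressure estimate: one must verify that an admissible pair $(p,r)$ satisfying both $p<\rho/2$ and $r<\sigma/2$ exists whose conjugates $(p',r')$ simultaneously allow the velocity factor to be controlled on the $L^{\rho}(L^{\sigma,\infty})$ scale while keeping $\rho/p'+\rho/(2p)\le\rho$. The remaining ingredients, namely the cutoff form of the energy inequality and the routine passage from weak $L^p$ norms to integrals of $|u|^m$, are standard.
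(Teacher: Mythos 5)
Your proposal follows essentially the same route as the paper: test the generalized energy inequality against a cutoff equal to one on $Q_0$ and supported in $Q_{-1}$, identify $U_0$ with the resulting left-hand side, and bound the $|u|^2$, $|u|^3$ and $|u|\,|P|$ terms via Lemma \ref{weaklemma}, H\"older, and Proposition \ref{pressureprop} (the paper simply fixes the conjugate pair $L^3_tL^3_x$ / $L^{3/2}_tL^{3/2}_x$ for the pressure term, so the exponents sum to exactly $\rho$ rather than to something strictly less). Your explicit case split on $\sigma\le\rho$ versus $\sigma>\rho$ when passing from the $L^\sigma$-power in time to the $L^\rho$-power is a point the paper's write-up glosses over, and handling it via the first statement of Lemma \ref{weaklemma} is the right fix.
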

\begin{proof}
Let $\eta$ be a smooth cutoff function satisfying
\begin{align}
\eta &= 1 \ \ \forall (x,t)\in Q_{0}\notag\\
\eta &=0 \ \ \forall (x,t)\in Q_{-1}^C\notag\\
\eta &\leq 1.\notag
\end{align}
Then, appealing to (\ref{genergy}), for $t>-1$ we have
\begin{align}
\int_{\mathbb{R}^3}u(t)^2\eta(t) \,dx +\int_{-2}^t\int_{\mathbb{R}^3}|\nabla u|^2\eta \,dx\,dt &\leq \int_{-2}^t\int_{\mathbb{R}^3}\frac{u^2}{2}(\eta_t+\triangle \eta) \,dx\,dt  \notag\\
&\ \ \ \ \ -\int_{-2}^t\int_{\mathbb{R}^3}(\nabla\eta\cdot u)\{\frac{u^2}{2}+P\} \,dx\,dt.\notag
\end{align}
The supremeum over $t\in (-2,1)$ of the left hand side above is greater then $U_0$ so it remains to bound the right hand side in terms of $\|u\|_{L^\rho((-2,1);L^{\sigma,\infty}(\mathbb{R}^3))}$.  Relying on the fact that $\eta$ is smooth and compactly supported, and appealing to Lemma \ref{weaklemma} we observe
\begin{align}
|\int_{-2}^t\int_{\mathbb{R}^3}(\nabla\eta\cdot u)\{\frac{u^2}{2}\} \,dx\,dt|&\leq C\|u\|^3_{L^3((-2,1);L^3(B_{-1}))}\notag\\
&\leq C(\epsilon)\|u\|^\rho_{L^\rho((-2,1);L^{\sigma,\infty}(B_{-1}))} +\epsilon^3.\notag
\end{align}
Similarly,
\begin{align}
|\int_{-2}^t\int_{\mathbb{R}^3}\frac{u^2}{2}(\eta_t+\triangle \eta) \,dx\,dt|\leq C(\epsilon)\|u\|^\rho_{L^\rho((-2,1);L^{\sigma,\infty}(B_{-1}))} +\epsilon^2.\notag
\end{align}

Lemma \ref{weaklemma}, and Proposition \ref{pressureprop} we find
\begin{align}
|\int_{-2}^t\int_{\mathbb{R}^3}(\nabla\eta\cdot u)P \,dx\,dt| &\leq  \|u\|_{L^{3}((-2,1);L^{3}(B_{-1}))} \|P\|_{L^{\frac{3}{2}}((-2,1);L^{\frac{3}{2}}(B_{-1}))}\notag\\
&\leq \left(C(\epsilon)\|u\|^{\frac{\rho}{3}}_{L^{\rho}((-2,1);L^{\sigma,\infty}(B_{-1}))} +\epsilon\right)\notag\\
&\ \ \ \ \ \ \ \cdot\left(C(\epsilon)\|u\|^{\frac{2\rho}{3}}_{L^{\rho}((-2,1);L^{\sigma,\infty}(\mathbb{R}^3))} +\epsilon\right)\notag\\
&\leq C(\epsilon)\|u\|^{\rho}_{L^{\rho}((-2,1);L^{\sigma,\infty}(\mathbb{R}^3))} +\epsilon^2+\epsilon^3.\notag
\end{align}
The proposition then follows immediately from these estimates.
\end{proof}


\subsection{Control of $\|u\|_\infty$}
We now establish the local control on $|u|$.
\begin{proposition}\label{uniformboundprop}
Let $\sigma,\rho\in(3,\infty)$.  There exists an absolute constant $C^*$, dependent only on $\sigma$ and $\rho$ such that for any solution to (\ref{NSE}), (\ref{genergy}) in $\mathbb{R}^3\times [-2,1]$ satisfying
\begin{align}
 \|u\|^\rho_{L^{\rho}((-2,1);L^{\sigma,\infty}(\mathbb{R}^3))} \leq C^*\notag
\end{align}
we have
$|u|<1$ on $[-1/2,1]\times \mathbb{R}^3$.
\end{proposition}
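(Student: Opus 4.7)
The plan is to run the De Giorgi-type iteration of Proposition \ref{vasseurprop} starting from $k=0$, using Proposition \ref{initialboundprop} to make $U_0$ arbitrarily small under the smallness hypothesis on $\|u\|_{L^\rho L^{\sigma,\infty}}$, and using Proposition \ref{pressureprop} (together with the compact embedding $L^r(B_0)\hookrightarrow L^1(B_0)$) to obtain a uniform bound on the pressure term in Vasseur's recurrence. Driving $U_k\to 0$ will force $|u|\leq 1$ on the limit cylinder $Q_\infty=[-1/2,1]\times B(1/2)$, and translation invariance of the hypothesis in $x$ will then upgrade this to all of $\mathbb{R}^3$.

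In more detail, I would fix admissible parameters $p$ and $r$ for Proposition \ref{pressureprop} and first assume $C^*\leq 1$, so that $\|u\|^\rho_{L^\rho L^{\sigma,\infty}}\leq 1$. Proposition \ref{pressureprop} (applied with, say, $\epsilon=1$) together with $L^r(B_0)\hookrightarrow L^1(B_0)$ then yields a bound $\|P\|_{L^p((-2,1);L^1(B_0))}\leq M$ with $M=M(\rho,\sigma,p,r)$ universal. Vasseur's recurrence becomes $U_k\leq C_p^k(1+M)\,U_{k-1}^{\beta_p}$ with $\beta_p>1$, and a classical super-geometric iteration — try the ansatz $U_k\leq \delta\,r^k$, pick $r\in(0,1)$ so that $C_p\, r^{\beta_p-1}\leq 1$, and then pick $\delta$ so that $(1+M)\,\delta^{\beta_p-1}\leq r$ — shows there is a threshold $\delta=\delta(C_p,\beta_p,M)>0$ such that $U_0\leq \delta$ forces $U_k\to 0$ geometrically.

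It then remains to secure $U_0\leq \delta$ via Proposition \ref{initialboundprop}: choose $\epsilon_0\leq \delta/2$ and then choose $C^*$ so small that $C(\epsilon_0)\,C^*\leq \delta/2$. With these choices the smallness $\|u\|^\rho_{L^\rho L^{\sigma,\infty}}\leq C^*$ yields $U_0\leq \delta$, hence $U_k\to 0$, which by the definitions of $v_k=(|u|-(1-2^{-k}))_+$ and $U_k$ gives $(|u|-1)_+\equiv 0$ on $Q_\infty$, i.e.\ $|u|\leq 1$ there; shrinking $C^*$ by a fixed factor provides the slack needed to upgrade this to the strict bound $|u|<1$. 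Since $\|u\|_{L^\rho((-2,1);L^{\sigma,\infty}(\mathbb{R}^3))}$ is invariant under spatial translation, applying the same argument to $u(\cdot,\cdot-x_0)$ for every $x_0\in\mathbb{R}^3$ produces $|u|<1$ on all of $[-1/2,1]\times\mathbb{R}^3$. The main obstacle I anticipate is the order in which the constants must be chosen: $M$ must be fixed first from the a priori assumption $C^*\leq 1$, then $\delta$ from the iteration, and only afterward $\epsilon_0$ and the final value of $C^*$ from Proposition \ref{initialboundprop}; verifying that this bookkeeping is consistent (in particular that $M$ does not need to be revisited after $C^*$ is decreased) is the only delicate point.
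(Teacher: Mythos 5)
Your proposal is correct and follows essentially the same route as the paper: bound the pressure via Proposition \ref{pressureprop}, make $U_0$ small via Proposition \ref{initialboundprop}, feed this into Vasseur's recurrence (Proposition \ref{vasseurprop}), drive $U_k\to 0$, and use spatial translation invariance to cover all of $\mathbb{R}^3$. The only cosmetic difference is that you re-derive the super-geometric iteration inline where the paper cites Lemma \ref{recursivelemma}, and you are in fact somewhat more careful than the paper about the order in which the constants are fixed and about passing from $|u|\leq 1$ to $|u|<1$.
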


This proposition is a combination of Propositions \ref{vasseurprop}, \ref{initialboundprop}, and \ref{pressureprop} with a recursive lemma.  Before proceeding with the proof we recall the recursive lemma.
\begin{lemma}\label{recursivelemma}
For $C>1$ and $\beta>1$ there exists a constant $C_0^*<1$ such that for every sequence verifying $0<W_0<C_0^*$ and for every $k$:
\begin{align}
0\leq W_{k+1}\leq C^kW_k^\beta\notag
\end{align}
we have
\begin{align}
\lim_{k\rightarrow \infty}W_k=0.\notag
\end{align}
\end{lemma}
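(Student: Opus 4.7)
The plan is to guess an explicit geometric ansatz $W_k \le K \lambda^k$ and verify it by induction. The ratio $\lambda \in (0,1)$ has to be chosen so that the growing factor $C^k$ in the recursion is absorbed by the super-linear power $\beta$; the cleanest balance is $C \lambda^{\beta-1} = 1$, which selects $\lambda := C^{-1/(\beta-1)}$. Because $C > 1$ and $\beta > 1$, this $\lambda$ lies in $(0,1)$, as required for decay.

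With $\lambda$ fixed, I would carry out the inductive step $W_k \le K \lambda^k \Rightarrow W_{k+1} \le K \lambda^{k+1}$. Substituting the inductive hypothesis into $W_{k+1} \le C^k W_k^\beta$ yields $W_{k+1} \le C^k K^\beta \lambda^{k\beta}$, and using the identity $C \lambda^{\beta-1} = 1$ this collapses to $K^\beta \lambda^k$. Comparing with the target $K \lambda^{k+1}$ reduces the entire inductive step to the single algebraic condition $K^{\beta-1} \le \lambda$. I would therefore set $K := \lambda^{1/(\beta-1)} = C^{-1/(\beta-1)^2}$ and define $C_0^* := K$; since $C > 1$ and $\beta > 1$, this $C_0^*$ is automatically smaller than $1$.

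The base case $W_0 < C_0^* = K = K\lambda^0$ is exactly the hypothesis, so the induction propagates to all $k$ and gives $W_k \le K \lambda^k$; since $0 < \lambda < 1$, this forces $W_k \to 0$. I do not anticipate any real obstacle: the only sensitive point is matching exponents carefully to isolate the threshold $C^{-1/(\beta-1)^2}$, and any strictly smaller choice of $C_0^*$ works equally well, so the strict-versus-non-strict inequality in the hypothesis is immaterial.
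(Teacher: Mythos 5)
Your proof is correct: the algebra checks out ($C\lambda^{\beta-1}=1$ collapses $C^kK^\beta\lambda^{k\beta}$ to $K^\beta\lambda^k$, and $K=\lambda^{1/(\beta-1)}$ closes the induction), and the resulting threshold $C_0^*=C^{-1/(\beta-1)^2}$ is exactly the classical one. The paper does not prove this lemma itself but defers to Vasseur's paper, where the argument is the same standard geometric-decay induction you give, so your approach matches the intended proof.
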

\begin{proof}
 See \cite{MR2374209}.
\end{proof}

Now we proceed with the proof of Proposition \ref{uniformboundprop}.
\begin{proof}
Let $C_0^*$ be as in \ref{recursivelemma}.  In Proposition \ref{initialboundprop} choose $\epsilon=\frac{C_0^*}{4}$ and let $C_{1}^*=C(\frac{C_0^*}{4})$ be the corresponding constant.  Set $C^*=\frac{C_{1}^*}{4}$.  Then if
\begin{align}
 \|u\|^\rho_{L^{\rho}((-2,1);L^{\sigma,\infty}(\mathbb{R}^3))} \leq C^*\notag
\end{align}
Proposition \ref{initialboundprop} yields $U_0<C_0^*<1$.  This, with Propositions \ref{vasseurprop}, \ref{initialboundprop} imply
\begin{align}
 U_k\leq C^kU_{k-1}^{\beta}\notag
\end{align}
where $C,\beta>1$.  It then follows from Lemma \ref{recursivelemma} that $\lim_{k\rightarrow\infty}U_k=0$.  Noticing that the PDE is invariant under spacial translations establishes the theorem.
\end{proof}

To complete this section we apply a scaling argument to bound $\|u\|_\infty$ in terms of the desired norms.  In the proposition below, it should be noted that the norms on both sides of the inequalities scale the same under the natural scaling of the Navier-Stokes equation, this is a consequence of the scaling argument.  In the following proposition $\rho$ and $\sigma$ must satisfy the relationship $\frac{2}{\rho}+\frac{3}{\sigma}<1$ which is less then the Lady{\v{z}}enskaja-Prodi-Serrin relation.  In the next section we shall pick a specific value of $\rho$ and $\sigma$ depending on $p$ and $q$ which will instead satisfy the usual Lady{\v{z}}enskaja-Prodi-Serrin relation.

\begin{proposition}\label{ineqprop}
Let $\sigma,\rho\in(3,\infty)$ be such that $\frac{2}{\rho}+\frac{3}{\sigma}<1$.  For any $\lambda \in (0,3)$ there exists an $A_\lambda>0$ such that if $u$ is a solution to (\ref{NSE}), (\ref{genergy}) then
\begin{align}
\|u(t)\|_{L^\infty(\mathbb{R}^3)} \leq A_\lambda \left(1+\|u\|^\frac{1}{{1-\frac{2}{\rho}-\frac{3}{\sigma}}}_{L^{\rho}((0,t);L^{\sigma,\infty}(\mathbb{R}^3))}\right)\notag
\end{align}
for all $t>\lambda$.
\end{proposition}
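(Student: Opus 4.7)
The plan is to derive this global $L^\infty$ bound from the local Proposition \ref{uniformboundprop} by a rescaling argument centered at an arbitrary space--time point. Fix $T > \lambda$ and $y \in \mathbb{R}^3$. For a parameter $\epsilon \in (0,1]$ to be chosen, I would introduce
$$u_\epsilon(s,z) = \epsilon\, u\bigl(T + \epsilon^2(s-1),\, y + \epsilon z\bigr),$$
which is again a Leray--Hopf solution of \eqref{NSE} satisfying \eqref{genergy}, defined on $[-2,1] \times \mathbb{R}^3$ provided $T - 3\epsilon^2 \geq 0$. The shift by $-1$ in the time argument is arranged so that the endpoint $s=1$ maps to the target time $T$.

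The next step is bookkeeping of the scaling. A direct change of variables (using only translation-invariance of Lebesgue measure and the homogeneity of the weak-$L^\sigma$ quasinorm under spatial dilation) gives
$$\|u_\epsilon\|^{\rho}_{L^\rho((-2,1);L^{\sigma,\infty}(\mathbb{R}^3))} \;=\; \epsilon^{\,\rho\alpha}\, \|u\|^{\rho}_{L^\rho((T-3\epsilon^2,\,T);L^{\sigma,\infty}(\mathbb{R}^3))},$$
where $\alpha := 1 - \tfrac{2}{\rho} - \tfrac{3}{\sigma} > 0$ by hypothesis. Because the exponent $\rho\alpha$ is strictly positive, the right-hand side becomes arbitrarily small as $\epsilon \to 0$. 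I would then choose
$$\epsilon := \min\!\left\{\sqrt{\lambda/3},\; \Bigl(C^{*}\big/\|u\|^{\rho}_{L^\rho((0,T);L^{\sigma,\infty})}\Bigr)^{\!1/(\rho\alpha)}\right\},$$
where $C^{*}$ is the universal constant from Proposition \ref{uniformboundprop}. The first entry guarantees $\epsilon \le 1$ (using $\lambda < 3$) and that $[T-3\epsilon^2, T] \subset [0,T]$, so the rescaling is legitimate; the second entry forces the scaled norm on $(-2,1)$ to be at most $C^{*}$.

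With this choice, Proposition \ref{uniformboundprop} applied to $u_\epsilon$ yields $|u_\epsilon| < 1$ on $[-1/2,1] \times \mathbb{R}^3$; evaluating at $(s,z)=(1,0)$ gives $\epsilon\,|u(T,y)| = |u_\epsilon(1,0)| < 1$, hence
$$|u(T,y)| \;<\; \frac{1}{\epsilon} \;=\; \max\!\left\{\sqrt{3/\lambda},\; (C^{*})^{-1/(\rho\alpha)}\,\|u\|^{\,1/\alpha}_{L^\rho((0,T);L^{\sigma,\infty})}\right\}.$$
Bounding this maximum by a sum and absorbing constants yields the statement with $A_\lambda := \max\{\sqrt{3/\lambda},\,(C^{*})^{-1/(\rho\alpha)}\}$ and exponent $1/\alpha = 1/(1-\tfrac{2}{\rho}-\tfrac{3}{\sigma})$, taking the supremum over $y \in \mathbb{R}^3$ to recover the $L^\infty_x$ norm.

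The only delicate point, and the main obstacle worth flagging, is the geometric constraint $T - 3\epsilon^2 \ge 0$ that caps $\epsilon$ at $\sqrt{\lambda/3}$; this is exactly why the constant $A_\lambda$ degenerates as $\lambda \to 0$ and why the restriction $\lambda < 3$ appears (to keep $\epsilon \le 1$ so that $u_\epsilon$ is a contractive rescaling). Everything else is routine, since the subcritical condition $\tfrac{2}{\rho}+\tfrac{3}{\sigma}<1$ gives $\alpha>0$ and therefore $\epsilon^{\rho\alpha}\to 0$, which is the one feature that allows us to beat any fixed threshold $C^{*}$ merely by making $\epsilon$ small.
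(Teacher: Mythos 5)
Your argument is correct and is essentially the paper's own proof: both exploit the subcriticality $\frac{2}{\rho}+\frac{3}{\sigma}<1$ so that the rescaled norm shrinks like $\epsilon^{\rho(1-\frac{2}{\rho}-\frac{3}{\sigma})}$, pick $\epsilon$ to drive that norm below the threshold $C^*$ of Proposition \ref{uniformboundprop}, and undo the scaling to read off $\|u(T)\|_{L^\infty}\leq 1/\epsilon$. Your packaging is slightly cleaner --- you take $\epsilon$ as an explicit minimum of the two constraints in one step, whereas the paper first treats $\lambda=3$ with an implicitly defined $\epsilon$ (appearing on both sides of its defining equation) and then rescales by $\sqrt{\lambda/3}$ --- but the route is the same.
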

\begin{proof}
This proof is modeled on the proof of Proposition 1.1 in \cite{MR2437103}.  Let $C^*$ be as in Proposition \ref{uniformboundprop}. First we prove this proposition for $\lambda=3$.  Assume that $\|u\|^\rho_{L^\rho((0,t);L^(\sigma,\infty)(\mathbb{R}^3))} \leq C^*$.  Let $T>3$ be chosen arbitrarily.  By translating in time we can consider a new solution, $u(x,t+(T-1))$, which solves (\ref{NSE}), (\ref{genergy}) on the time interval $(-2,1)$.  From Proposition \ref{uniformboundprop} we conclude $\|u(T)\|_{L^\infty(\mathbb{R}^3)}\leq 1$.  If $\|u\|^\rho_{L^\rho((0,t);L^{\sigma,\infty}(\mathbb{R}^3)} > C^*$ consider the scaled solution $u_\epsilon(x,t)=u(\epsilon x,\epsilon^2 t)$ where
\begin{align}
\epsilon = \left(\frac{C^*}{\|u\|^{\rho}_{L^{\rho}((0,t/\epsilon^2);L^{\sigma,\infty}(\mathbb{R}^3))} }\right)^{\frac{1}{\rho(1-\frac{2}{\rho}-\frac{3}{\sigma})}}.\notag
\end{align}
This solutions satisfies $\|u_\epsilon\|^{\rho}_{L^{\rho}((0,t/\epsilon^2);L^{\sigma,\infty}(\mathbb{R}^3)} \leq C^*$ and we conclude, relying on the above argument, that $\|u_\epsilon\|_{L^\infty((0,T/\epsilon^2)\times \mathbb{R}^3} \leq 1$.  Noting
\begin{align}
\|u_\epsilon(T/\epsilon^2)\|_{L^\infty(\mathbb{R}^3)} = \epsilon \|u(t)\|_{L^\infty(\mathbb{R}^3)}\notag
\end{align}
the proof of this proposition is complete in the case $\lambda=3$.

If $\lambda \in (0,3)$ consider the scaled solution $w(x,t)=u_\epsilon(x,t)$ with $\epsilon=\left(\frac{\lambda}{3}\right)^\frac{1}{2}$.  Applying the above argument to $w$ yields the conclusion with $A_\lambda = \left(\frac{3}{\lambda}\right)^{\frac{1}{2}}A_3$.
\end{proof}

\section{Regularity Theorem}
We now apply a log Gronwall argument to Proposition \ref{ineqprop} and establish our regularity results.  Throughout this section let $p,q\in (3,\infty)$ be Prodi-Serrin exponents, i.e. $1=\frac{2}{p}+\frac{3}{q}$.  Also set
\begin{align}
\sigma = \frac{3(q-1)}{2} \ \ \ \ \ \ \ \ \rho=\frac{3(q-1)}{q-3}.\notag
\end{align}
The choice of $\sigma$ and $\rho$ is made for the following reasons.  First, with this choice $\|u\|^\rho_{L^\rho((0,t);L^\sigma(\mathbb{R}^3))}$ scales like $\|u\|_{L^\infty(\mathbb{R}^3\times (0,t))}$ under the natural scaling of the Navier-Stokes equation.  This is exactly the RHS of the inequality in Proposition \ref{ineqprop} since with this choice $1-\frac{2}{\rho}-\frac{3}{\sigma}=\frac{1}{\rho}$.  Second,
the H\"older inequality implies
\begin{align}
\|u\|^\rho_{L^{\rho}((0,t);L^{\sigma,\infty}(\mathbb{R}^3))} \leq \int_0^t \|u(s)\|_{L^\infty(\mathbb{R}^3)}\|u(s)\|^p_{L^{q,\infty}(\mathbb{R}^3)}\,ds.\label{holderrel2}
\end{align}
Using this relation with the Gronwall inequality we will establish our regularity result.

We work in the following situation
\begin{assumption}\label{ass1}
Let $u,P$ be a solution to (\ref{NSE}) satisfying (\ref{genergy}) in the sense of distributions.  In addition assume there is a $T^*>0$ such that $u\in L^\infty(\mathbb{R}^3\times[0,T^*])$.
\end{assumption}
\begin{remark}\label{remarkass}
The assumption is introduced because we are not focusing on local in time regularity.  Since it is known that Leray-Hopf solutions immediately become smooth, for any Leray-Hopf solution we can consider a new solution $\tilde{u}(x,t)=u(x,t+\epsilon)$ for $\epsilon$ small which satisfies this assumption.  This allows us to recover the theorem stated in the introduction from the one proven below.
\end{remark}

\begin{theorem}\label{logimprovethrm}
Let $u$ be a solution to (\ref{NSE}), (\ref{genergy}) satisfying Assumption \ref{ass1}.  If
\begin{align}
\int_0^t\frac{\|u(s)\|^p_{L^{q,\infty}(\mathbb{R}^3)}}{e+\log(e+\|u(s)\|_{L^\infty(\mathbb{R}^3)})}\,ds < \infty\notag
\end{align}
then $\|u\|_{L^\infty(\mathbb{R}^3\times (0,t))}<\infty$.
\end{theorem}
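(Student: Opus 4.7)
The plan is to derive from Proposition \ref{ineqprop} a Gronwall-type integral inequality for $\|u(t)\|_\infty$ whose weight can be split, using the hypothesis, into an $L^1$ factor times the logarithm of the unknown itself; a logarithmic Gronwall argument then closes it.

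First I would fix $T>0$ and pick $\lambda\in(0,\min(T,T^*))$, so that Assumption \ref{ass1} gives $M_0:=\|u\|_{L^\infty(\mathbb{R}^3\times[0,\lambda])}<\infty$. With the choice $\rho=3(q-1)/(q-3)$, $\sigma=3(q-1)/2$ one has $1-2/\rho-3/\sigma=1/\rho$, so Proposition \ref{ineqprop} yields, for every $t\in[\lambda,T]$,
\begin{align}
\|u(t)\|_\infty \le A_\lambda\bigl(1 + \|u\|_{L^\rho((0,t);L^{\sigma,\infty}(\mathbb{R}^3))}^{\rho}\bigr). \notag
\end{align}
Inserting the H\"older estimate (\ref{holderrel2}) and introducing the monotone majorant $F(t):=\max\!\bigl(M_0,\,\sup_{s\in[0,t]}\|u(s)\|_\infty\bigr)$ (so that $F(t)\le M_0$ for $t\le\lambda$ handles the short initial interval), I would arrive at
\begin{align}
F(t) \le C_1 + C_1\int_0^t F(s)\,\|u(s)\|_{L^{q,\infty}(\mathbb{R}^3)}^{p}\,ds \qquad \forall\,t\in(0,T). \notag
\end{align}

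Next, the hypothesis lets me write $\|u(s)\|_{L^{q,\infty}}^{p}=g(s)\bigl(e+\log(e+\|u(s)\|_\infty)\bigr)$ with $g\in L^1(0,T)$. Since $y\mapsto y(e+\log(e+y))$ is increasing, substituting and replacing $\|u(s)\|_\infty$ by $F(s)$ gives
\begin{align}
F(t) \le C_1 + C_1\int_0^t F(s)\bigl(e+\log(e+F(s))\bigr)g(s)\,ds. \notag
\end{align}
Denoting the right-hand side by $H(t)$, so that $F\le H$ and $H'(t)\le C_1 H(t)(e+\log(e+H(t)))g(t)$, I would divide and integrate to obtain
\begin{align}
\Phi(H(t)) \le \Phi(C_1) + C_1\int_0^t g(s)\,ds, \qquad \Phi(y):=\int_{1}^{y}\frac{dz}{z\bigl(e+\log(e+z)\bigr)}. \notag
\end{align}
Because the integrand defining $\Phi$ is comparable to $1/(z\log z)$ for large $z$, the function $\Phi$ diverges at infinity and is invertible on $[1,\infty)$; this forces $H(T)\le \Phi^{-1}\!\bigl(\Phi(C_1)+C_1\|g\|_{L^1(0,T)}\bigr)<\infty$, and in particular $\|u\|_{L^\infty(\mathbb{R}^3\times(0,T))}\le F(T)<\infty$.

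The main obstacle will be handling the logarithm inside the integrand without breaking the Gronwall closure. The monotonicity of $y\mapsto y(e+\log(e+y))$ is what permits pulling the sup inside the integrand, and the genuinely structural fact that makes the proof go through --- and that pins down why the denominator in the hypothesis is tight --- is that $\int^\infty dz/(z\log z)=\infty$. The remaining technicalities (absolute continuity of $H$ given only $g\in L^1$, and the initial interval $(0,\lambda)$) are routine in view of Assumption \ref{ass1}.
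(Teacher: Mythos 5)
Your proposal is correct and follows essentially the same route as the paper: combine Proposition \ref{ineqprop} with the H\"older estimate (\ref{holderrel2}), use Assumption \ref{ass1} to control the initial interval $(0,\lambda)$, and close with a logarithmic Gronwall argument based on the divergence of $\int^\infty \frac{dz}{z(e+\log(e+z))}$. Your monotone majorant $F(t)$ and the factorization $\|u(s)\|_{L^{q,\infty}}^p=g(s)(e+\log(e+\|u(s)\|_\infty))$ are just a repackaging of the paper's $\Psi$ and $B(s)$, and the technical points you defer (absolute continuity of $H$, the a priori finiteness needed to run Gronwall) are treated at the same level of detail in the paper itself.
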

\begin{proof}
Fix $\lambda \in (0,T^*)$ where $T^*$ is as in Assumption \ref{ass1}.  Combining Proposition \ref{ineqprop} with (\ref{holderrel2}) shows, for all $t>\lambda$
\begin{align}
\|u(t)\|_{L^\infty(\mathbb{R}^3)} &\leq A_\lambda\left(1+\int_0^t \|u(s)\|_{L^\infty(\mathbb{R}^3)}\|u(s)\|^p_{L^{q,\infty}(\mathbb{R}^3)}\,ds\right)\notag\\
&\leq A_\lambda\left(1+\int_0^t \Psi(\|u(s)\|_{L^\infty(\mathbb{R}^3)}) B(s)\,ds\right) \notag\\
&\leq A_\lambda+A_\lambda\int_\lambda^t \Psi(\|u(s)\|_{L^\infty(\mathbb{R}^3)}) B(s)\,ds \notag\\
&\ \ \ \ \ \ \ \ +A_\lambda\sup_{t\in [0,\lambda]}\{\Psi(\|u(s)\|_{L^\infty(\mathbb{R}^3)})\}\int_0^\lambda  B(s)\,ds\notag\\
B(s)&=\frac{\|u(s)\|^p_{L^{q,\infty}(\mathbb{R}^3))}}{e+\log\{e+\|u(s)\|_{L^\infty(\mathbb{R}^3)}\}}\notag\\
\Psi(r)&= r(e+\log(e+r)).\notag
\end{align}
Relying on Assumption \ref{ass1} we note $\sup_{t\in [0,\lambda]}\{\Psi(\|u(s)\|_{L^\infty(\mathbb{R}^3)})\}<\infty$.  Since $\int B<\infty$ by assumption we write
\begin{align}
 \|u(t)\|_{L^\infty(\mathbb{R}^3)} &\leq C_\lambda\left(1+\int_\lambda^t \Psi(\|u(s)\|_{L^\infty(\mathbb{R}^3)}) B(s)\,ds\right).\label{RHSestimate}
\end{align}
Here the constant relies on $\lambda$ and the bound on $u$ assumed by Assumption \ref{ass1}.

The function $\Psi$ satisfies $\int_1^\infty\frac{1}{\Psi(r)}\,dr=\infty$ since $\frac{1}{\Psi(r)}>\frac{d}{dr}\log(e+\log(e+r))$.  This is enough to guarantee the result in the theorem.  Indeed, denote the RHS of (\ref{RHSestimate}) by $H(t)$ so that $\|u(t)\|_{L^\infty(\mathbb{R}^3)}\leq H(t)$.  Since $\Psi$ is monotone we compute
\begin{align}
\frac{d}{dt}H(t)= C\Psi(\|u(t)\|_{L^\infty(\mathbb{R}^3)}) B(t) \leq \Psi(H(t)) B(t).\notag
\end{align}
Integrating in time from $\lambda$ to $t$ yields
\begin{align}
\int_{C_\lambda}^{H(t)}\frac{1}{\Psi(s)}\,ds\leq C \int_\lambda^tB(s)\,ds.\notag
\end{align}
This holds for all $t>\lambda$.  Recall by assumption  $\int_0^tB(s)\,ds <\infty$ so the same is true for the left hand side.  Recalling $\int_1^\infty\frac{1}{\Psi(r)}\,dr=\infty$ we conclude $H(t)<\infty$, hence $\|u(t)\|_{L^\infty(\mathbb{R}^3)}<\infty$.
\end{proof}
Taking into account Remark \ref{remarkass} this theorem implies the one stated in the introduction.

\section{Appendix}
Throughout the appendix let $p$, $q$ satisify the condition $\frac{2}{p}+\frac{3}{q}=1$.  We now construct a function $f(x,t)$ for which 
\begin{align}
\int_0^T\frac{\|f(s)\|^p_{L^{q,\infty}(\mathbb{R}^3)}}{e+\log(e+\|f(s)\|_{L^\infty(\mathbb{R}^3)})}\,ds < \infty\label{bound}
\end{align}
but $f\notin L^{p,r}((0,T);L^{q,\infty}(\mathbb{R}^3))$ for any $r<\infty$.  First recall the norm
\begin{align}
\|f\|_{L^{p,r}(\Omega)}= \left(\int_0^\infty R^{r-1}[\lambda_{f,\Omega}(R)]^{\frac{r}{p}}\,dR\right)^{\frac{1}{r}}.\notag
\end{align}

Let
\begin{align}
f(x,t)=\frac{A(t)}{\sqrt{(t_\infty-t)+(x-x_0)^2}}\notag
\end{align}
with $A(t)\geq 0$ to be defined shortly.  By direct calculation one finds
\begin{align}
\|f\|_{L^{q,\infty}(\mathbb{R}^3)}&=\frac{A(t)}{(t_\infty-t)^{\frac{1}{p}}},\notag\\
\|f\|_{L^\infty(\mathbb{R}^3)}&=\frac{A(t)}{(t_\infty-t)^{\frac{1}{2}}}.\notag
\end{align}

Define $A$ in the following way:
\begin{align}
A(t)=\sum_{n=1}^\infty 2^{m_n}&\chi_{(t_n, t^*_n)}(t),\notag\\
m_n=n^{2}-\frac{n}{2}\ \ \ \ \ \ &\ \ \ \ \  k_n=pm_n+n,\notag\\
t_n= t_\infty(1-2^{-n})\ \ \ \ \ &\ \ \ \ \ t_n^*= t_n+t_\infty2^{-k_n}.\notag
\end{align}
Here, $\chi_I$ is the indicator function for the interval $I$.  Note that for $t\in(t_n, t^*_n)$, 
\begin{align}
t_\infty(2^{-n}-2^{-k_n})\leq (t_\infty-t)\leq t_\infty 2^{-n}.\notag
\end{align}

{\bf Claim 1:}
\begin{align}
\int_0^{t_\infty}\frac{\|f(s)\|^p_{L^{q,\infty}(\mathbb{R}^3)}}{e+\log(e+\|f(s)\|_{L^\infty(\mathbb{R}^3)})}\,ds < \infty\notag
\end{align}

To prove this claim evaluate directly.  The integral in question is bounded by
\begin{align}
t_\infty^{-1}\sum_{n=1}^\infty \frac{2^{-k_n}2^{pm_n}(2^{-n}-2^{-k})^{-1}}{e+\log(e+t_\infty^{-\frac{1}{2}}2^{m_n+\frac{n}{2}})}
\leq C\sum_{n=1}^\infty \frac{1}{n^{2}}.\notag
\end{align}
This establishes the claim

{\bf Claim 2:}
For any $r\in (1,\infty)$,
\begin{align}
\|f\|_{L^{p,r}((0,t_\infty);L^{q,\infty}(\mathbb{R}^3))} = \infty.\notag
\end{align}
Set $R_n=2^{\frac{k_n}{p}}t_\infty^{\frac{1}{p}}$.  Then,
\begin{align}
\mu(\{(t_\infty-t)^{-\frac{1}{p}}A(t)>R\})\geq\sum_{n=n_0}^\infty2^{-k_n}\geq t_\infty R^{-p}_{n_0}\ \ \ \ \ \ if \ \ \ R<R_{n_0}.\notag
\end{align}
This implies
\begin{align}
\|f\|^r_{L^{p,r}((0,t_\infty);L^{q,\infty}(\mathbb{R}^3))}\geq t_\infty^{r/p}\sum_{n=1}^\infty \int_{R_{n-1}}^{R_n} R^{r-1}R_n^{-r}dR\notag\\
=t_\infty^{r/p}\sum_{n=1}^\infty R_n^{-r}(R_n^{r}-R_{n-1}^r).\notag
\end{align}
If we can show that the sum
\begin{align}
\sum_{n=1}^\infty R^r_{n-1}R^{-r}_n\notag
\end{align}
is convergent then we have proven the claim.  This sum is exactly
\begin{align}
\sum_{n=1}^\infty (2^{k_{n-1}-k_n})^{\frac{r}{p}}=2^{\frac{3r}{2}-\frac{r}{p}}\sum_{n=1}^\infty 2^{-2nr}\notag
\end{align}
which converges for all choices of $r>1$.

\bibliographystyle{plain}

\bibliography{weaklogimprove_local}

\end{document}